\begin{document}

\title{The Logarithm of the Modulus of an Entire Function as a Minorant for a Subharmonic Function outside a Small Exceptional Set}

\author[B. Khabibullin]{B.\,N. Khabibullin\footnote{This  work was supported by a Grant of the Russian Science Foundation (Project No. 18-11-00002).}}

\begin{abstract}
Let $u\not\equiv -\infty$ be a subharmonic function on the complex plane $\mathbb C$. In 2016, we obtained a result on the existence of an entire function $f\neq 0$ satisfying the estimate $\log|f|\leq {\sf B}_u$ on $\mathbb C$, where functions ${\sf B}_u$ are integral averages of $u$ for rapidly shrinking disks as it approaches infinity. We give another equivalent version of this result with $\log |f|\leq u$ outside a very  small exceptional set  if $u$ is of finite order.    
\end{abstract}

\keywords{subharmonic function, entire function, exceptional set, Riesz measure, integral average, covering of sets, type and order of function.}
\ams{30D20, 31A05, 26A12}

\maketitle

\section{Introduction}
\subsection{Definitions and notations. Preliminary result}

We consider the  set\/ $\mathbb R$ of {\it real numbers\/} mainly as the \textit{real axis\/} in the {\it complex plane\/} $\mathbb C$, and  $\mathbb R^+:=\{x\in \mathbb R\colon x\geq 0\}$  is the  {\it positive semiaxis\/} in $\mathbb C$. Besides, $\overline{\mathbb R}:=\mathbb R\cup \{\pm\infty\}$ is {\it the extended real line\/}  with the natural order
$-\infty \leq x \leq +\infty$ for every $x\in \overline{\mathbb R}$, $\overline{\mathbb R}^+:={\mathbb R}^+\cup\{+\infty\}$, $x^+:=\sup\{x,0\}$ for each $x\in \overline{\mathbb R}$.  For an {\it extended real function\/} $f\colon S\to \overline{\mathbb R}$, its {\it positive part\/} is the function 
$
f^+\colon s\underset{{\text{\tiny $s\in S$}}}{\longmapsto} \bigl(f(s)\bigr)^+.
$
 
For $z\in \mathbb C$ and $r\in \mathbb R^+$, we denote by 
$D(z,r):=\{z' \in \mathbb C  \colon |z'-z|<r\}$ {\it  the open disk 
 centered at $z$ and of radius $r$,} where $D(z,0)$ is the empty set $\varnothing$,   
 $D(r):=D(0,r)$, $\overline{D}(z,r):=\{z' \in \mathbb C  \colon |z'-z|\leq r\}$ {\it the closed  disk centered at $z$ and of radius $r$,} $\overline D(r):=\overline D(0,r)$, and 
$\partial \overline D(z,r):=\overline{D}(z,r)\!\setminus\!D(z,r)$
 {\it the circle centered at $z$ and of radius $r$,} 
$\partial \overline D(r):=\partial \overline D(0,r)$.
For a function $v\colon \overline   D(z,r)\to \overline{\mathbb R}$, 
we define the {\it integral averages\/} on circles and  disks as 
\begin{subequations}\label{df:MCB}
\begin{align}
\mathsf{C}_v(z, r)&
:=\frac{1}{2\pi} \int_{0}^{2\pi}  v(z+re^{i\theta}) {\rm \, d} \theta, \quad &\mathsf{C}^{\text{\rm \tiny rad}}_v(r):=\mathsf{C}_v(0, r),\tag{\ref{df:MCB}C}\label{df:MCBc}\\
\mathsf{B}_v(z,r)&
:=\frac{2}{r^2}\int_{0}^{r}\mathsf{C}_v(z, t)t{\rm \, d} t ,
\quad &\mathsf{B}^{\text{\rm \tiny rad}}_v(r):=\mathsf{B}_v(0, r); \tag{\ref{df:MCB}B}\label{df:MCBb}\\
 \mathsf{M}_v(z,r)&
:=\sup_{z'\in \partial\overline D(z,r)}v(z'), 
\quad &\mathsf{M}^{\text{\rm \tiny rad}}_v(r):=\mathsf{M}_v(0, r),  
\tag{\ref{df:MCB}M}\label{df:MCBm}
\end{align}
\end{subequations}
where $\mathsf{M}_v(z,r):=\sup\limits_{z'\in \overline D(z,r)}v(z')$
if $v$ is subharmonic  on $\mathbb C$ \cite[Definition 2.6.7]{Rans}, \cite{HK}.  

The following result \cite[Corollary 2]{KhaBai16} of 2016 found several useful applications
\cite[Lemma 5.1]{BaiKhaKha17}, \cite{BaiKha16}, \cite[Proposition 2]{KhaKha16}, \cite{BalKha17}, \cite[Lemma 6.3]{KhaShm19}, \cite{KhaKha19}, \cite[7.1]{KhaRozKha19} for entire functions on the complex plane:

\begin{theorem}[{\rm \cite[Corollary 2]{KhaBai16}, see also \cite[Lemma 5.1]{BaiKhaKha17}}]\label{pr:f}
Let $u\not\equiv -\infty$ be a subharmonic function on\/ $\mathbb C$, and    $q\in \mathbb R^+$  be a number with the corresponding function
\begin{equation}\label{p}
Q\colon z\underset{\text{\tiny $z\in \mathbb C$}}{\longmapsto} \frac{1}{(1+|z|)^q}\leq 1.   
\end{equation}
Then there is an entire function $f_q\not\equiv 0$ on $\mathbb C$ such that 
\begin{equation}\label{fBC}
 \log \bigl|f_q(z)\bigr|\leq {\sf B}_{u}\bigl(z,Q(z)\bigr)
\leq {\sf C}_{u}\bigl(z,Q(z)\bigr)\leq 
{\sf M}_{u}\bigl(z,Q(z)\bigr) \quad\text{for each $z\in \mathbb C$}.
\end{equation}
\end{theorem}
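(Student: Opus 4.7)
The chain $\mathsf{B}_u\leq\mathsf{C}_u\leq\mathsf{M}_u$ is purely formal for subharmonic $u$: since $t\mapsto\mathsf{C}_u(z,t)$ is nondecreasing, the weighted average $\mathsf{B}_u(z,r)=\frac{2}{r^2}\int_0^r\mathsf{C}_u(z,t)\,t\,\mathrm{d}t$ is at most $\mathsf{C}_u(z,r)\cdot\frac{2}{r^2}\int_0^r t\,\mathrm{d}t=\mathsf{C}_u(z,r)$, and the definitions in \eqref{df:MCB} give $\mathsf{C}_u\leq\mathsf{M}_u$ at once. So the substance of the theorem is the left-hand estimate together with the existence of $f_q\not\equiv 0$.

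\textbf{Construction via discretization of the Riesz measure.} The plan is to take the Riesz measure $\mu=\frac{1}{2\pi}\Delta u$ of $u$ and replace it by a locally finite integer-valued counting measure $\nu=\sum_k n_k\delta_{a_k}$ matched to $\mu$ on the scale $Q$. First I would fix a Vitali- or Besicovitch-type cover of $\mathbb{C}$ by closed disks $\overline D(\zeta_k,r_k)$ with $r_k$ slightly smaller than $Q(\zeta_k)$ and with uniformly bounded overlap; then I would assign each disk an integer multiplicity $n_k$ within $1$ of $\mu(\overline D(\zeta_k,r_k))$, placing all $n_k$ copies at a single point $a_k\in\overline D(\zeta_k,r_k)$ chosen so that the barycenter of $n_k\delta_{a_k}$ coincides with the barycenter of $\mu|_{\overline D(\zeta_k,r_k)}$. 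The function $f_q$ is then the Weierstrass canonical product with zero multiset $\{(a_k,n_k)\}$, the genus of the convergence factors being chosen adaptively (no finite-order hypothesis on $u$ is imposed), and the slack $(1+|z|)^{-q}$ in $Q$ allows the unavoidable error in the discretization to be absorbed into polynomial correction terms.

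\textbf{Main obstacle.} Since $\log|f_q|$ is subharmonic, $\log|f_q(z)|\leq\mathsf{B}_{\log|f_q|}(z,Q(z))$, so the desired inequality reduces to the averaged comparison
\begin{equation*}
\int_{D(z,Q(z))}\bigl(\log|f_q|-u\bigr)\,\mathrm{d}A\leq 0\qquad\text{for every }z\in\mathbb{C}.
\end{equation*}
Locally, $\log|f_q|-u$ equals the logarithmic potential of the signed measure $\nu-\mu$ plus a harmonic term that the convergence factors must keep nonpositive in average. The hard step is controlling this potential uniformly in $z$: cells $\overline D(\zeta_k,r_k)$ far from $z$ contribute only through the barycentric first-moment cancellation, which gains a factor $r_k/|z-\zeta_k|$; cells near $z$ are handled by the condition $r_k<Q(\zeta_k)$, so that each is either essentially contained in $D(z,Q(z))$---where the near-matching of integer mass against $\mu$ cancels the divergent part of the logarithm---or essentially disjoint from it. The real difficulty is that the two natural constraints are in tension: making $r_k$ small enough that the discretization is accurate at every scale pushes towards divergence of the canonical product, while making $r_k$ large enough for convergence weakens the pointwise estimate. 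The compromise $r_k\sim Q(\zeta_k)$ is what makes both the product and the averaged bound work, and verifying uniformity in $z$ under this compromise is where I expect essentially all of the work to sit.
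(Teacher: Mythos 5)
The paper does not prove Theorem~\ref{pr:f} at all; it is imported verbatim from \cite[Corollary~2]{KhaBai16} (see also \cite[Lemma~5.1]{BaiKhaKha17}), so there is no in-paper proof to compare your attempt against. Evaluating your sketch on its own terms: the plan you describe (atomize the Riesz measure $\varDelta_u$ on a bounded-overlap cover with cells of radius $\sim Q(\zeta)$, preserve zeroth and first moments per cell, form a canonical product, and then exploit $\log|f_q(z)|\leq\mathsf{B}_{\log|f_q|}(z,Q(z))$ to pass from the pointwise estimate to an averaged comparison of potentials) is the correct general strategy for this circle of results, and the reduction in your ``Main obstacle'' paragraph is sound. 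But what you submit is a plan, not a proof.

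The gap is exactly where you say it is, and it is the whole theorem. You assert without verification the key estimate that the logarithmic potential of $\nu-\varDelta_u$ is controlled, uniformly in $z$, after averaging over $D(z,Q(z))$. None of the required technical steps are carried out: (i) you do not show how to extract a Besicovitch-type cover with uniformly bounded multiplicity when the admissible radii $r_k\lesssim(1+|\zeta_k|)^{-q}$ decay polynomially, nor that the resulting rounded integer masses sum to a locally finite counting measure compatible with a convergent canonical product of adaptively chosen genus; (ii) the first-moment cancellation gain $r_k/|z-\zeta_k|$ must be summed over all far cells, and because the cell count per annulus grows like a power of the radius, this summation is genuinely delicate with $Q(z)=(1+|z|)^{-q}$ --- it is not automatic; (iii) the near-field analysis (cells meeting $\overline D(z,Q(z))$) is waved at with the words ``the near-matching of integer mass ... cancels the divergent part,'' but for a one-sided bound the sign of the rounding matters and interacts with the sign of the convergence-factor polynomial, and you do not commit to a rounding rule or show the net sign comes out right. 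Also note that the intermediate inequalities $\mathsf{B}_u\leq\mathsf{C}_u\leq\mathsf{M}_u$ that you prove up front are indeed trivial for subharmonic $u$, so no credit accrues there. In short: correct diagnosis of the proof mechanism, but essentially none of the analysis that actually proves the inequality is present.
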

In this article, we obtain another equivalent version of Theorem \ref{pr:f} for subharmonic functions of finite order. This version may be useful in another situations that we are not discussing here.

\subsection{Main result for minorants outside an  exceptional set} 
For an extended real function  $m\colon \mathbb R^+\to \overline{\mathbb R}$, we define \cite{Kiselman}, \cite{Azarin}, \cite[2.1, (2.1t)]{KhaShm19}
\begin{equation}\label{order}
{\sf ord}[m]:=\limsup_{r\to +\infty} \frac{\log\bigl(1+m^+(r)\bigr)}{\log r}\in 
\overline{\mathbb R}^+,
\end{equation}
the {\it order of growth of\/} $m$; for $p\in \mathbb R^+$, 
\begin{equation}\label{typevf}
{\sf type}_p[m]:=\limsup_{r\to +\infty} \frac{m^+(r)}{r^p}\in 
\overline{\mathbb R}^+, 
\end{equation}
the {\it type of growth of $m$ at the order $p$.\/} Thus, it is easy to see that
\begin{equation}\label{mot}
{\sf order} [m]=\inf\bigl\{p\in \mathbb R^+\colon 
{\sf type}_p [m]<+\infty\bigr\}, \quad \inf\varnothing :=+\infty.
\end{equation}

If $u$ is a subharmonic function on $\mathbb C$,  then 
\begin{equation}\label{otu}
{\sf order}[u]\overset{\eqref{df:MCBm}}{:=}{\sf order}[{\sf M}^{\text{\rm \tiny rad}}_u], \quad 
{\sf type}_p[u]\overset{\eqref{df:MCBm}}{:=}{\sf type}_p[{\sf M}^{\text{\rm \tiny rad}}_u],
\end{equation}
and, under the condition  ${\sf type}_p[u]<+\infty$,    
the following $2\pi$-periodic function 
\begin{equation}\label{ind}
{\sf ind}_p[u](s):=\limsup_{r\to +\infty}\frac{u(re^{is})}{r^p}\in \mathbb R, \quad s\in \mathbb R, 
\end{equation}
is called the {\it indicator of the growth of $u$ at the order} $p$  \cite[3.2]{Azarin}.

For a ray or a circle  on $\mathbb C$, 
we denote by ${\sf mes}$  the {\it linear Lebesgue measure\/} on this ray or the  {\it measure of length\/} on this circle.

\begin{theorem}\label{thm} Let $u\not\equiv -\infty$ be a subharmonic function on the complex plane, and     ${\sf order}[{\sf B}^{\text{\rm \tiny rad}}_u]\overset{\eqref{df:MCBb}}{<}+\infty$. Then the conclusion \eqref{p}--\eqref{fBC} of Theorem\/ {\rm \ref{pr:f}} with arbitrary positive numbers $q\in \mathbb R^+$ is equivalent to the following statement:

For any positive  $q\in \mathbb R^+$, there are  
an entire function $f_q\not\equiv 0$ and a no-more-than countable  set of disks $D(z_k,t_k)$, $k=1,2,\dots$,  
such that
\begin{subequations}\label{E}
\begin{align}
\log \bigl|f_q(z)\bigr|&\leq u(z)
\quad\text{for each $z\in  \mathbb C\setminus E_q$, where}
\tag{\ref{E}I}\label{EI}\\
E_q:=\bigcup_k D(z_k,r_k),& \quad
\sup_k t_k\leq 1, \quad \sum_{|z_k|\geq R} t_k=O\Bigl(\frac{1}{R^q}\Bigr) \text{ as  $R\to +\infty$}.
\tag{\ref{E}E}\label{EE}
\end{align}
\end{subequations}
If ${\sf ord}[u]\overset{\eqref{otu}}{<}+\infty$, then statements  \eqref{p}--\eqref{fBC} of Theorem\/ {\rm \ref{pr:f}} 
 or statements \eqref{E} of this Theorem\/ {\rm \ref{thm}} can be supplemented by the following restrictions:
\begin{subequations}\label{ot}
\begin{align}
{\sf ord}\bigl[\log |f_q|\bigr]&\overset{\eqref{order},\eqref{mot},\eqref{otu}}{\leq} {\sf ord}[u],
\tag{\ref{ot}o}\label{oto}
\\
{\sf type}_p\bigl[\log |f_q|\bigr]&\overset{\eqref{typevf},\eqref{otu}}{\leq} {\sf type}_p[u]\quad \text{for each $p\in \mathbb R^+$},\tag{\ref{ot}t}\label{ott}\\
{\sf ind}_p \bigl[\log |f_q|\bigr]&\overset{\eqref{ind}}{\leq} {\sf ind}_p[u]\quad \text{for each $q\in \mathbb R^+$}.\tag{\ref{ot}i}\label{oti} 
\end{align}
\end{subequations}
Besides, for  any  ray $L\subset \mathbb C$, we have
\begin{equation}\label{R}
{\sf mes}\Bigl(L\setminus \bigl( E_q\cup D(R)\bigr)\Bigr)=O\Bigl(\frac{1}{R^q}\Bigr)
\quad \text{as  $R\to +\infty$},
\end{equation}
 and also 
\begin{equation}\label{C}
{\sf mes}\Bigl( E_q\bigcap \partial \overline D(R)\Bigr)=O\Bigl(\frac{1}{R^q}\Bigr)
\quad \text{as  $R\to +\infty$}.
\end{equation}
\end{theorem}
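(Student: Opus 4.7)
The two directions of the equivalence are asymmetric: \eqref{fBC} $\Rightarrow$ \eqref{E} substitutes the disk-average majorant by the pointwise $u$ at the cost of a small exceptional set and is the substantive step, while \eqref{E} $\Rightarrow$ \eqref{fBC} is a softer averaging argument using the submean inequality. Throughout, the hypothesis ${\sf ord}[\mathsf{B}_u^{\text{\rm \tiny rad}}] < +\infty$ enters only to convert local densities of the Riesz measure $\mu$ of $u$ into polynomial bounds which calibrate the relevant coverings; by the Jensen-type identity relating $\mathsf{B}_u^{\text{\rm \tiny rad}}$ to $\int_0^R \mu(\overline D(t))/t \, dt$, a polynomial bound on $\mathsf{B}_u^{\text{\rm \tiny rad}}$ transfers to one on $\mu(\overline D(R))$.

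\textbf{Forward direction via Boutroux--Cartan covering.} For a target $q$, choose an auxiliary $q_0 > q$ large enough in terms of ${\sf ord}[\mathsf{B}_u^{\text{\rm \tiny rad}}]$ and apply Theorem \ref{pr:f} to produce $f_{q_0}$ with $\log|f_{q_0}(z)| \leq \mathsf{B}_u(z, Q_0(z))$. Since $\mathsf{B}_u \geq u$ everywhere, one writes $\mathsf{B}_u(z, r) - u(z) = \int_{\overline D(z, r)} k(z, \zeta, r) \, d\mu(\zeta)$ with a nonnegative Green-type kernel $k$, so the set where $\mathsf{B}_u(z, Q_0(z))$ strictly exceeds $u(z)$ (and in particular the set where $\log|f_{q_0}| > u$) is a level set of a localized logarithmic potential of $\mu$. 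A Boutroux--Cartan covering lemma, applied on dyadic annuli $\{R \leq |z| < 2R\}$, encloses this level set in disks $D(z_k, t_k)$ with $\sum_{R \leq |z_k| < 2R} t_k$ controlled by the $\mu$-mass of the enlarged annulus times $Q_0(R)$. The polynomial bound on $\mu(\overline D(R))$, combined with $Q_0(R) = (1+R)^{-q_0}$ and a dyadic summation, yields $\sum_{|z_k| \geq R} t_k = O(R^{-q})$ once $q_0$ is chosen large enough, and a merging step enforces $\sup_k t_k \leq 1$.

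\textbf{Reverse direction and additional estimates.} Given \eqref{E} for some $q$ large compared to a target $q'$, observe that whenever $D(z, Q'(z))$ is disjoint from $E_q$, the subharmonicity of $\log|f_q|$ yields
$$\log|f_q(z)| \leq \mathsf{B}_{\log|f_q|}(z, Q'(z)) \leq \mathsf{B}_u(z, Q'(z)),$$
the second inequality holding because $\log|f_q| \leq u$ throughout the integration disk. The residual set of $z$, for which $D(z, Q'(z))$ meets $E_q$, is itself covered by a slightly enlarged family of disks still satisfying a summability of the form \eqref{EE}; on this residual set one multiplies $f_q$ by an auxiliary Weierstrass product with zeros at the centers $z_k$ of exceptional disks, of genus fixed by ${\sf ord}[u]$, which restores the global $\mathsf{B}_u$-majorant at the cost of a slight adjustment of $q'$. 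The refinements \eqref{oto}--\eqref{oti} follow at once from $\log|f_q| \leq u$ off $E_q$, since the $\limsup$'s defining order, type, and indicator are insensitive to removal of a set as in \eqref{EE}: every ray meets $E_q \setminus D(R)$ in length at most $2 \sum_{|z_k| \geq R} t_k = O(R^{-q})$, which is \eqref{R}, and each circle $\partial\overline D(R)$ meets $E_q$ in arc length at most $2 \sum_{||z_k| - R| \leq t_k} t_k = O(R^{-q})$, which is \eqref{C}. The principal technical obstacle is the quantitative calibration of the Boutroux--Cartan covering in the forward direction so as to hit precisely the decay exponent $q$ required in \eqref{EE}, and it is exactly there that the finite-order hypothesis is essential.
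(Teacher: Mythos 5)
Your forward direction (\eqref{fBC} $\Rightarrow$ \eqref{E}) replaces the paper's Besicovitch-type covering lemma with a Boutroux--Cartan covering applied on dyadic annuli. The paper's Lemma~\ref{pr:r} covers the set $E_{\mu,r}$ where $\int_0^{r(z)}\mu(z,t)/t\,{\rm d}t>1$ (exactly the excess $\mathsf{C}_u(z,Q(z))-u(z)$ given by Poisson--Jensen) by disks of bounded multiplicity and bounds $\sum t_k$ by $\int r^{\vee d}\,{\rm d}\mu$; you instead write the excess as a Green-kernel potential and invoke a Cartan-type estimate annulus by annulus. Both translate $\mathsf{ord}[\mathsf{B}_u^{\text{\rm\tiny rad}}]<+\infty$ into a polynomial bound on $\varDelta_u^{\text{\rm\tiny rad}}$ (the paper's Lemma~\ref{w}, your ``Jensen-type identity'') and then sum. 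Your route is plausible and genuinely different, though you leave the calibration of the Cartan constants to the annuli and the merging step to enforce $\sup_k t_k\leq 1$ as sketches; the paper's version has the advantage that the radius function $r(z)=Q(z)$ and the quantitative bound \eqref{iSD} handle the nonuniform shrinkage in one stroke.

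The reverse direction is where your argument has a real gap. You try to pass from \eqref{EI} to a $\mathsf{B}_u$-bound by demanding that the whole \emph{disk} $D(z,Q'(z))$ avoid $E_q$, and you propose to patch the ``residual'' points $z$ --- those for which every such disk meets $E_q$ --- by multiplying $f_q$ by a Weierstrass product vanishing at the centers $z_k$. That patch is not justified: $\log|W|$ is unbounded above away from its zeros and of the same rough growth order as $u$, so $\log|f_qW|$ does not inherit the majorant $\mathsf{B}_u(\cdot,Q^*(\cdot))$ globally; you would have to engineer $W$ to be $\leq 1$ in modulus off $E_q$ while still being entire and nonzero, which is precisely the kind of construction the theorem is meant to deliver, not assume. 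The paper avoids the problem entirely: by the projection argument of Lemmas~\ref{lem1} and~\ref{lemR}, for every $z$ with $|z|\geq R_q$ there exists \emph{some} radius $r_z\leq(1+|z|)^{-q'}$ whose \emph{circle} $\partial\overline D(z,r_z)$ misses $E_q$ (this needs only that the summed radii, hence the summed radial projections, are below $(1+|z|)^{-q'}$, which \eqref{EE} with $q>q'$ supplies). The submean inequality on that single circle gives $\log|f_q(z)|\leq\mathsf{C}_{\log|f_q|}(z,r_z)\leq\mathsf{C}_u(z,r_z)\leq\mathsf{C}_u\bigl(z,(1+|z|)^{-q'}\bigr)$ with no residual set at all, and Lemma~\ref{CB} converts $\mathsf{C}$ back to $\mathsf{B}$. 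Requiring a circle rather than a disk to dodge $E_q$ is the key simplification you missed: free circles always exist under \eqref{EE}, whereas free disks typically do not, and no Weierstrass correction is then needed. Your derivations of \eqref{R} and \eqref{C} from \eqref{EE}, and of \eqref{oto}--\eqref{oti}, are fine and match the paper's (brief) treatment.
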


 Theorem \ref{thm} is proved in Sec.~\ref{pr:Th} after some preparation.

\section{Preparatory results}

\subsection{On exceptional sets}\label{S:ex} 

For a Borel measure $\mu$ on $\mathbb C$, we set
\begin{equation}\label{mzt}
\mu(z,t):=\mu\bigl(\overline D(z,t)\bigr),\quad z\in \mathbb C, \; t\in \mathbb R^+. 
\end{equation}

For  a function $d\colon \mathbb C \to \mathbb R^+$, 
$S\subset \mathbb C $ and $r\colon \mathbb C \to \mathbb R$,  we define 
\begin{equation*}\label{Sdr}
\begin{split}
S^{\cup d}&:=\bigcup_{z\in S}D\bigl(z,d(z)\bigr)\subset \mathbb C , \\
r^{\vee d}&\colon z\underset{\text{\tiny $z\in \mathbb C$}}{\longmapsto} \sup 
\Bigl\{ r(z')\colon z'\in D\bigl(z,d(z)\bigr)\Bigr\}\in \overline{\mathbb R},
\end{split}
\end{equation*}
and denote  \textit{the indicator function\/} of set $S$ 
by 
$$
{\bf 1}_S\colon z\underset{\text{\tiny $z\in \mathbb C$}}{\longmapsto} \begin{cases}
1&\text{ if $z\in S$},\\
0&\text{ if $z\notin S$}.
\end{cases}
$$

\begin{lemma}[{\rm cf. \cite[Normal Points Lemma]{Kha84}, 
\cite[\S~4. Normal points, Lemma]{KudKha07}}]\label{pr:r} Let  
$r\colon \mathbb C \to \mathbb R^+$ be a Borel function such that 
\begin{equation}\label{r0}
d:=2\sup\{ r(z)\colon z\in \mathbb C \}
 <+\infty,
\end{equation} 
and $\mu$ be a Borel positive measure on $\mathbb C$ with  
\begin{equation}\label{Er}
 E_{\mu,r}:= \left\{z\in \mathbb C  \colon \int_0^{r(z)}\frac{\mu(z,t)}{t}\,{\rm d} t>1\right\}\subset \mathbb C .
\end{equation}
Then there exists  a no-more-than countable set of disks $D(z_k,t_k)$, $k=1,2,\dots$,  such that 
\begin{equation}\label{DE}
\begin{split}
z_k\in E_{\mu,r}, \quad &t_k\leq r(z_k), \quad
E_{\mu,r}\subset \bigcup_k D(z_k,t_k), \\
\sup_{z\in \mathbb C }\#\bigl\{k&\colon z\in D(z_k,t_k)\bigr\}\leq 2020,
\end{split}
\end{equation}
i.\,e., the multiplicity of this covering $\{D(z_k,t_k)\}_{k=1,2,\dots}$ of set $E_{\mu,r}$ is not more than\/ $2020$, and, for every $\mu$-measurable subset $S\subset \bigcup_k D(z_k,t_k)$,
\begin{equation}\label{iSD}
\frac{1}{2020}\sum_{S\cap D(z_k,t_k)\neq \varnothing} t_k\leq \int_{S^{\cup d}} r^{\vee r} \,{\rm d} \mu \leq \int_{S^{\cup d}} r^{\vee d} \,{\rm d} \mu.
\end{equation}
\end{lemma}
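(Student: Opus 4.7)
The plan is to carry out the classical Cartan--Govorov-type covering argument for exceptional sets of subharmonic functions, refined to track the quantitative bound in \eqref{iSD}. First I would translate the defining integral condition of $E_{\mu,r}$ into a local mass estimate: for each $z \in E_{\mu,r}$ the hypothesis $\int_0^{r(z)} \mu(z,t)/t\,{\rm d} t > 1$ forces the existence of some $t(z) \in (0, r(z)]$ with
\[
\mu\bigl(\overline D(z, t(z))\bigr) \;\geq\; t(z)/r(z),
\]
since otherwise the integrand is bounded above by $1/r(z)$ on the whole interval and the integral is at most $1$. I would take $t(z)$ to be the infimum of such radii, which preserves the inequality by right-continuity of $\mu(z,\cdot)$ and yields a Borel-measurable assignment $z \mapsto t(z)$.

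Next I would apply the planar Besicovitch covering theorem to the family $\{\overline D(z, t(z)) : z \in E_{\mu,r}\}$, whose radii are uniformly bounded by $d/2$ by \eqref{r0}, to extract a countable subfamily $\{D(z_k, t_k)\}_{k \geq 1}$ with $z_k \in E_{\mu,r}$ and $t_k = t(z_k) \leq r(z_k)$ that covers $E_{\mu,r}$ with multiplicity bounded by an absolute constant that is comfortably below $2020$ in dimension two. This supplies \eqref{DE} in one stroke.

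For the quantitative inequalities \eqref{iSD}, I would sum the local bounds $t_k \leq r(z_k)\mu(\overline D(z_k, t_k))$ and interchange the order of integration, rewriting $\sum_{S \cap D(z_k, t_k) \neq \varnothing} t_k$ as the $\mu$-integral of $\sum_k r(z_k) \mathbf{1}_{\overline D(z_k, t_k)}(w)$. For any $k$ with $S \cap D(z_k, t_k) \neq \varnothing$ and any $w \in \overline D(z_k, t_k)$, selecting $z' \in S \cap D(z_k, t_k)$ yields $|w - z'| \leq 2 t_k \leq d$, so $w \in S^{\cup d}$. Combining the multiplicity bound with the pointwise comparison $r(z_k) \leq r^{\vee r}(w)$ then delivers the first inequality of \eqref{iSD}. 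The second inequality is immediate, since $r \leq d/2$ gives $D(\cdot, r(\cdot)) \subset D(\cdot, d)$ and hence $r^{\vee r} \leq r^{\vee d}$ pointwise.

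The main obstacle is securing that pointwise comparison $r(z_k) \leq r^{\vee r}(w)$ for every $w \in \overline D(z_k, t_k)$: a careless choice of $t(z)$ need not place the center $z_k$ inside the ball $D(w, r(w))$ entering the definition of $r^{\vee r}$, because one only controls $|w - z_k| \leq t_k \leq r(z_k)$ and not $r(w)$. I expect this to require an additional regularity constraint on the assignment $z \mapsto t(z)$ — for instance, insisting that $t(z) \leq r(w)$ for every $w \in D(z, t(z))$, which can be arranged by pre-processing $r$ with a suitable Borel lower-envelope modification before Step~1. Once that refinement is made, the remainder of the argument is routine Vitali/Besicovitch bookkeeping.
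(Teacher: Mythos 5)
Your overall strategy coincides with the paper's own proof: extract the local mass inequality $t_z < r(z)\,\mu(z,t_z)$ from the defining integral condition, pass to a countable subfamily via the Besicovitch covering theorem (with $2020$ comfortably exceeding the sharp planar constant), and then convert $\sum t_k$ into a $\mu$-integral using the multiplicity bound. That part matches the paper step for step.

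The obstruction you isolated concerning $r(z_k) \leq r^{\vee r}(w)$ is a genuine one if the definition of $r^{\vee d}$ is read literally as $r^{\vee d}(z) = \sup\{r(z') \colon z' \in D(z, d(z))\}$: taking $r$ large at $z_k$ and very small on a punctured neighbourhood of $z_k$ gives $r^{\vee r}(w) < r(z_k)$ for $w$ near $z_k$, and the paper's own proof passes over this comparison without justification. But your proposed repair — forcing $t(z) \leq r(w)$ for all $w \in D(z, t(z))$ via a lower-envelope pre-processing of $r$ — is not the right fix: replacing $r$ by a smaller Borel function shrinks $E_{\mu,r}$ and can destroy the covering property you need for the lemma as stated. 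Two simpler resolutions exist. First, the rightmost term of \eqref{iSD}, with the \emph{constant}-radius envelope $r^{\vee d}$, is unaffected: for $w \in \overline D(z_k, t_k)$ one has $|w - z_k| \leq t_k < r(z_k) \leq d/2 < d$, so $z_k \in D(w,d)$ and $r(z_k) \leq r^{\vee d}(w)$; this is exactly the estimate used downstream in Section~3.1, so the paper's application is unimpaired. Second, the definition of $r^{\vee d}$ is most plausibly a misprint for $\sup\{r(z') \colon z \in D(z', d(z'))\}$, under which $|w - z_k| < t_k < r(z_k)$ places $z_k$ in the admissible family for $r^{\vee r}(w)$ and your obstruction evaporates with no modification of $r$ at all. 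The fix is geometric, not measure-theoretic.

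A minor remark: taking $t(z)$ as the infimum of admissible radii and securing Borel measurability of $z \mapsto t(z)$ is unnecessary machinery. The Landkof version of the Besicovitch covering theorem that the paper invokes imposes no measurability requirement on the radius assignment, only uniform boundedness, so any admissible $t_z \in (0, r(z))$ with $t_z < r(z)\,\mu(z,t_z)$ will do.
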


\begin{proof}  By definition \eqref{Er},  there is a number 
\begin{equation}\label{tz}
 t_z\in \bigl(0,r(z)\bigr)\quad  \text{\it such that\/}\quad   0<t_z<r(z)\mu(z,t_z)
\text{ \it for each\/ } z\in E_{\mu,r}.
\end{equation}
Thus, the system $\mathcal D=\bigl\{D(z,t_z)\bigr\}_{z\in E}$ of these disks  has properties
\begin{equation*}
E_{\mu,r}\subset \bigcup_{z\in E} D(z,t_z), \quad 0<t_z\leq r(z)\overset{\eqref{r0}}{\leq} R. 
\end{equation*}
By the Besicovitch Covering Theorem 
\cite[2.8.14]{Federer}, \cite{FL}, \cite{GriKri10}, \cite[I.1, Remarks]{Gusman}, \cite{Sullivan}, \cite{Krantz} in the Landkof version \cite[Lemma 3.2]{L},  we can select some no-more-than countable subsystem in $\mathcal D$ of disks $D(z_k,t_k)\in \mathcal D$, $k=1,2,\dots$, $t_k:=t_{z_k}$, such that properties \eqref{DE} are fulfilled.
Consider a $\mu$-measurable subset $S\subset \bigcup_k D(z_k,t_k)$. 
In view of \eqref{tz} it is easy to see that
\begin{equation}\label{S}
\bigcup\Bigl\{D(z_k, t_k)\colon S\cap D(z_k,t_k)\neq \varnothing\Bigr\} 
\overset{\eqref{tz},\eqref{r0}}{\subset}\bigcup_{z\in S}D(z, d)
=S^{\cup d}.
\end{equation}
Hence, in view of \eqref{tz} and \eqref{DE}, 
we obtain 
\begin{multline*}
\sum_{S\cap D(z_k,t_k)\neq \varnothing} t_k:=\sum_{S\cap D(z_k,t_k)\neq \varnothing} t_{z_k} \overset{\eqref{tz}}{\leq}
\sum_{S\cap D(z_k,t_k)\neq \varnothing} r(z_k)\mu(z,t_k) \\
=\sum_{S\cap D(z_k,t_k)\neq \varnothing} \int_{D(z_k,t_k)}r(z_k)
{\rm \,d} \mu(z)
\overset{\eqref{tz}}{\leq}
\sum_{S\cap D(z_k,t_k)\neq \varnothing} \int_{D(z_k,t_k)}r^{\vee r}
{\rm \,d} \mu\\
\overset{\eqref{S}}{=}
\sum_{S\cap D(z_k,t_k)\neq \varnothing} \int_{S^{\cup d}}
{\bf 1}_{D(z_k,t_k)} r^{\vee r} {\rm \,d} \mu\\=
 \int_{S^{\cup d}}
\left(\sum_{S\cap D(z_k,t_k)\neq \varnothing}{\bf 1}_{D(z_k,t_k)}\right) r^{\vee r} {\rm \,d} \mu
\\
\overset{\eqref{DE}}{\leq} 
2020 \int_{S^{\cup d}}
 r^{\vee r} {\rm \,d} \mu\overset{\eqref{r0}}{\leq}
2020 \int_{S^{\cup d}}
 r^{\vee d} {\rm \,d} \mu.
\end{multline*} 
Thus, we obtain  \eqref{iSD}. This completes the proof of Lemma \ref{pr:r}.
\end{proof}

\begin{lemma}\label{lem1} Let $\bigl\{D(z_j, t_j)\bigr\}_{j\in J}$ 
be a system of disks in $\mathbb C$, $d:=2\sup_{j\in J} t_j<+\infty$.
Then, for each $z\in \mathbb C$, there is  a positive number $r\leq d$ such that 
\begin{equation}\label{Demp}
 \bigcup_{j\in J} D(z_j,t_j) \bigcap\partial \overline D(z,r)=\varnothing.
\end{equation} 
\end{lemma}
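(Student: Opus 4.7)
The plan is to reformulate Lemma~\ref{lem1} as a one-dimensional covering question on the positive real axis. Fix $z\in\mathbb C$ and, for each $j\in J$, set $\rho_j:=|z-z_j|$. Since the distance from the center $z_j$ to the circle $\partial\overline D(z,r)$ is $|r-\rho_j|$, the open disk $D(z_j,t_j)$ meets $\partial\overline D(z,r)$ if and only if $|r-\rho_j|<t_j$; equivalently, $r$ lies in the open interval
$$
I_j:=\bigl(\rho_j-t_j,\,\rho_j+t_j\bigr)\cap(0,+\infty),
$$
whose one-dimensional Lebesgue measure ${\sf mes}\,I_j$ does not exceed $2t_j\le d$. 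Therefore \eqref{Demp} holds for a given $r>0$ precisely when $r\notin B:=\bigcup_{j\in J}I_j$, and the lemma reduces to exhibiting some $r\in(0,d]\setminus B$.

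I would next trim the family: only the indices $j$ with $I_j\cap(0,d]\ne\varnothing$ can obstruct us, and for such indices one has $\rho_j<d+t_j\le 3d/2$; so it suffices to consider the disks whose centers lie in the compact set $\overline D(z,3d/2)$, each contributing an open sub-interval of $(-d/2,2d)$ of length at most $d$. The conclusion would then be extracted by showing that an arbitrary family of open intervals of length at most $d$ cannot exhaust the half-open interval $(0,d]$ of length $d$; I would carry this out by first reducing (via a one-dimensional Besicovitch/Vitali selection, in the spirit of the argument used in Lemma~\ref{pr:r}) to a sub-family of bounded overlap, and then comparing the total measure of the sub-cover with $d$, using that any open sub-cover of $(0,d]$ must genuinely overshoot both endpoints.

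The main obstacle is this last step. The bound ${\sf mes}\,I_j\le d$ is, on each individual interval, sharp, so the non-covering conclusion cannot rest on length alone: it must exploit both the openness of every $I_j$ and the fact that $(0,d]$ contains its right endpoint~$d$, and this is where the delicate endpoint/overlap bookkeeping takes place. Once a radius $r\in(0,d]\setminus B$ is produced, the desired equality \eqref{Demp} follows immediately from the equivalence established in the first paragraph.
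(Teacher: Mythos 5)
Your first paragraph is correct and matches the paper's own reduction: $D(z_j,t_j)$ meets $\partial\overline D(z,r)$ precisely when $r\in I_j:=(\rho_j-t_j,\rho_j+t_j)$, so one must find $r\in(0,d]\setminus B$, where $B:=\bigcup_j I_j$. But the step you are trying to carry out in the second paragraph --- that a family of open intervals of individual length at most $d$ cannot exhaust $(0,d]$ --- is simply false, and no Besicovitch/Vitali selection or endpoint bookkeeping will save it. Take $z=0$ and just the two disks $D(d/2,d/2)$ and $D(d,d/2)$ on the positive real axis: then $2\sup_j t_j=d$, while $I_1=(0,d)$ and $I_2=(d/2,3d/2)$ give $B=(0,3d/2)\supset(0,d]$, so \emph{every} circle $\partial\overline D(0,r)$ with $0<r\leq d$ meets one of the two disks. (For a configuration with $t_j\to 0$ one may instead take $z_n=2d\cdot 2^{-n}$, $t_n=d\cdot 2^{-n}$, $n\geq 1$; then $B\supset(0,d]$ again.) So the obstacle you flag at the end is not a missing argument but a genuine falsity: Lemma~\ref{lem1} does not hold under the hypothesis $2\sup_j t_j<+\infty$ alone. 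The paper's one-sentence proof, which deduces from this sup-bound that the union of radial projections fails to exhaust $[0,d]$, breaks at exactly the same spot.

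What rescues the downstream use is a hypothesis that Lemma~\ref{lem1} omits. In the proof of Lemma~\ref{lemR}, the trimmed family additionally satisfies the total-length bound $\sum_{k\in J}t_k\leq\tfrac12(1+|z|)^{-q'}$ (the display preceding \eqref{dDz+}), hence ${\sf mes}\,B\leq\sum_k 2t_k\leq(1+|z|)^{-q'}$. Since $B$ is open, were $B$ to contain $(0,(1+|z|)^{-q'}]$ it would also contain a neighbourhood of the right endpoint $(1+|z|)^{-q'}$ and therefore have measure strictly greater than $(1+|z|)^{-q'}$, a contradiction; this produces a good radius $r\leq(1+|z|)^{-q'}$. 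So the correct auxiliary lemma must control $\sum_j t_j$ rather than $\sup_j t_j$: either restate Lemma~\ref{lem1} under a hypothesis of the form $\sum_j 2t_j\leq D$ with conclusion $r\in(0,D]$, or dispense with it altogether and insert this one-line measure argument directly into the proof of Lemma~\ref{lemR}.
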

\begin{proof}  Consider a disk $\overline D(z,d)$, where, without loss of generality, we can assume that $z=0$.  Then, by condition $d:=2\sup_{j\in J} t_j<+\infty$, the union 
\begin{equation}\label{prdb}
 \bigcup_{j\in J}  \bigl(D(z_j,t_j)e^{-i\arg z_j}\Bigr)\bigcap \bigl[0,d] 
\end{equation}
of radial projections  $\bigl(D(z_j,t_j)e^{-i\arg z_j}\Bigr)\bigcap \bigl[0,d]$
of $D(z_j,t_j)$ onto radius $[0,d]$  is not empty, i.\,e. there is a point $r\in [0,d]$ outside \eqref{prdb}, which gives  \eqref{Demp} for $z=0$.   

Lemma \ref{lem1} is proved.
\end{proof} 

A consequence of Lemma \ref{lem1} is the following lemma:
\begin{lemma}\label{lemR} 
Let $\bigl\{D(z_k, t_k)\}_{k=1,2,\dots}$ be a system  of disks satisfying  \eqref{EE} with  a strictly positive  number $q\in \mathbb R^+\!\setminus\!\{0\}$,
and  $q'<q$ be a positive number. Then there exists a number $R_q\in \mathbb R^+$ such that for any $z\in \mathbb C$ with $|z|>R_q$ there is a positive number $r\leq \bigl(1+|z|\bigr)^{-q'}$ such that \eqref{Demp} holds for $J=\{1,2,\dots\}$.
\end{lemma}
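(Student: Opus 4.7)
The plan is to refine the radial projection argument from the proof of Lemma \ref{lem1}: instead of invoking the crude bound $d=2\sup_k t_k$ (which is of order~$1$ by \eqref{EE}, far too large for our purposes), I would use the decay in \eqref{EE} to estimate the linear measure of the set of ``forbidden'' radii inside the target interval $\bigl(0,(1+|z|)^{-q'}\bigr]$.

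Fix $z\in\mathbb C$ with $|z|$ large (to be made precise below). A circle $\partial\overline D(z,r)$ meets a disk $D(z_k,t_k)$ if and only if $\bigl||z-z_k|-r\bigr|<t_k$, i.e.\ precisely when $r$ lies in the open interval $I_k:=\bigl(|z-z_k|-t_k,\;|z-z_k|+t_k\bigr)$, whose Lebesgue length is at most $2t_k$. Set
\[
B_z\;:=\;\bigl(0,(1+|z|)^{-q'}\bigr]\cap\bigcup_{k}I_k;
\]
as soon as ${\sf mes}(B_z)<(1+|z|)^{-q'}$, the complement of $B_z$ in $\bigl(0,(1+|z|)^{-q'}\bigr]$ has positive measure and any of its points is a number $r$ satisfying \eqref{Demp} for $J=\{1,2,\dots\}$.

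The decisive observation is that only indices $k$ with $|z_k|\geq |z|/2$ contribute to $B_z$. Indeed, if $I_k$ intersects $\bigl(0,(1+|z|)^{-q'}\bigr]$, then $|z-z_k|<(1+|z|)^{-q'}+t_k\leq 2$, where we used $t_k\leq 1$ and $(1+|z|)^{-q'}\leq 1$; hence $|z_k|\geq |z|-2\geq |z|/2$ whenever $|z|\geq 4$. Applying the tail estimate from \eqref{EE} with $R=|z|/2$ then yields
\[
{\sf mes}(B_z)\;\leq\; 2\!\!\sum_{|z_k|\geq |z|/2}\!\!t_k\;=\;O\bigl(|z|^{-q}\bigr)\quad\text{as $|z|\to+\infty$}.
\]
Because $q'<q$, this bound is $o\bigl((1+|z|)^{-q'}\bigr)$, so there exists $R_q\in\mathbb R^+$ with ${\sf mes}(B_z)<(1+|z|)^{-q'}$ for every $z$ with $|z|>R_q$, which produces the required $r$.

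The only delicate point — and it is really bookkeeping rather than a genuine obstacle — is the uniform estimate showing that the ``relevant'' disks all have centers with $|z_k|\geq |z|/2$; both $t_k\leq 1$ and $(1+|z|)^{-q'}\leq 1$ are needed there to control the reach $t_k+(1+|z|)^{-q'}$ of any relevant disk by a constant independent of $k$ and $z$. Once this is in place, the conclusion follows immediately from the comparison $q'<q$ applied to the tail bound furnished by \eqref{EE}.
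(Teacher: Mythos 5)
Your proof is correct, and it pursues the same underlying idea as the paper's — reduce the existence of a good circle to a one-dimensional question about radii, note that only the disks with centers $z_k$ in the tail $|z_k|\ge |z|-2$ can matter, and invoke the tail estimate in \eqref{EE} — but it executes the one-dimensional step by a direct measure estimate rather than through Lemma~\ref{lem1}. The paper restricts attention to the subfamily of disks meeting a bounded neighbourhood of $z$, uses \eqref{EE} to make $\sup t_k$ over that subfamily smaller than $\frac{1}{2}(1+|z|)^{-q'}$, and then appeals to Lemma~\ref{lem1}, which promises a good radius $r\le d=2\sup_j t_j$ via a radial-projection remark. You instead bound the Lebesgue measure of the set of forbidden radii in $(0,(1+|z|)^{-q'}]$ by $2\sum t_k$ over the relevant indices and observe that, by \eqref{EE}, this is $O(|z|^{-q})=o\bigl((1+|z|)^{-q'}\bigr)$ since $q'<q$. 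Your route is not only more direct but also more robust: the bound $d=2\sup_j t_j$ in Lemma~\ref{lem1} is, as stated, not enough to guarantee a good radius — for example the two disks $D(0,1/2)$ and $D(3/4,1/2)$ together meet every circle $\partial\overline D(0,r)$ with $0<r\le 1=2\sup_j t_j$ — whereas the quantity that actually controls the measure of bad radii is $\sum_j t_j$, which is exactly what you use and what \eqref{EE} is designed to estimate. In effect, your measure-theoretic argument supplies the content that the radial-projection remark in the proof of Lemma~\ref{lem1} is gesturing toward, and makes the passage to Lemma~\ref{lemR} airtight.
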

\begin{proof}  By condition \eqref{EE}, there is a constant $C\in \mathbb R^+$ such that 
\begin{equation}\label{dDz}
\sum_{D(z_k,t_k)\!\setminus\!D(|z|-2)\neq \varnothing} t_k\leq \frac{C}{(1+|z|)^q}
\quad \text{for each $z\in \mathbb C$ with $|z|\geq 3$,}
\end{equation}
and, for $|z|\geq 3$, 
\begin{equation}\label{2}
\text{\it if $D(z_k,t_k)\setminus D(z,2)\neq \varnothing$, then $D(z_k,t_k)\!\setminus\!D(|z|-2)\neq \varnothing$.}
\end{equation}  
For $0\leq q'<q$, we choose $R_q\geq 3$ so that
\begin{equation}\label{R0}
C(1+|z|)^{q'-q}\leq \frac{1}{2}\quad\text{for all $|z|\geq R_q\geq 3$}.
\end{equation}
 It is follows from  \eqref{dDz}--\eqref{R0} that 
\begin{equation*}
\sum_{D(z_k,t_k)\!\setminus\!D(z, 2)\neq \varnothing} t_k\leq \frac{C}{(1+|z|)^q}\\
\overset{\eqref{R0}}{\leq} \frac{1}{2}
\frac{1}{(1+|z|)^{q'}}
\quad \text{for each $z\in \mathbb C$ with $|z|\geq R_q$,}
\end{equation*} 
and 
\begin{equation}\label{dDz+}
\sup_{D(z_k,t_k)\!\setminus\!D(z, 2)\neq \varnothing} t_k\leq  \frac{1}{2}
\frac{1}{(1+|z|)^{q'}}
\quad \text{for each $z\in \mathbb C$ with $|z|\geq R_q$.}
\end{equation} 

For an {\it arbitrary fixed point\/} $z\in \mathbb C$ with $|z|\geq R_q$, we consider   \begin{equation*}
J:=\bigl\{k\colon D(z_k,t_k)\!\setminus\!D(z,2)\neq \varnothing\bigr\}, \quad 
{\mathcal D}:=\bigl\{D(z_k, t_k)\bigr\}_{k\in J}.
\end{equation*} 
By Lemma \ref{lem1}, with these $J$ and $\mathcal D$ there is a circle
$\partial \overline D(z,r)$   such that 
\begin{equation*}
0\leq r\overset{\eqref{dDz+}}{\leq} (1+|z|)^{-q'}\leq 1, \quad  
\bigcup_{k\in J} D(z_k, t_k)\bigcap \partial \overline D(z,r)=\varnothing.
\end{equation*} 
But, in view of \eqref{2},   if $k\notin J$, then, as before, 
$D(z_k, t_k)\bigcap \partial \overline D(z,r)=\varnothing$. 

Lemma \ref{lemR} is proved.
\end{proof}

\subsection{The order and the upper density for measures on $\mathbb C$}

For  a Borel positive measure  $\mu$ on $\mathbb C$,  functions 
\begin{equation}\label{mu}
\mu^{\text{\rm \tiny rad}}\colon r\underset{\text{\tiny $r\in \mathbb R^+$}}{\overset{\eqref{mzt}}{\longmapsto}} \mu(0,r),  
\end{equation} 
is called the {\it radial counting function of $\mu$}, the quality 
$$
{\sf ord}[\mu]\overset{\eqref{order},\eqref{mot}}{:=}{\sf ord}\bigl[\mu^{\text{\rm \tiny rad}}\bigr]
$$ 
is called the \textit{order\/}  of  measure $\mu$, and, for $p\in \mathbb R^+$, the quantity 
\begin{equation}\label{tmu}
{\sf type}_p[\mu]\overset{\eqref{typevf}}{:=}{\sf type}_p\bigl [\mu^{\text{\rm \tiny rad}}\bigr]
\end{equation}
is called the \textit{upper density\/}  of  measure $\mu$ {\it at the order\/} $p$. 
 
If  $u\not\equiv -\infty$ is a subharmonic function  on $\mathbb C$ with the {\it Riesz measure} 
\begin{equation}\label{Rm}
\varDelta_u =\frac{1}{2\pi}\!\bigtriangleup\!u,
\end{equation}
where  the {\it Laplace operator\/} $\bigtriangleup$ acts in the sense of the theory of distributions or generalized functions \cite{Rans}, \cite{HK}, then,
by the Poisson\,--\,Jensen formula \cite[4.5]{Rans}, \cite{HK}
\begin{equation}\label{PJ}
u(z)={\sf C}_u(z,r)-\int_0^r\frac{\varDelta_u(z,t)}{t}\,{\rm d}t, \quad z\in \mathbb C, 
\end{equation}
 in a disk $D(z,r)$  in the form \cite[3, (3.3)]{BaiKhaKha17}
\begin{equation*}
{\sf C}_u(r)-{\sf C}_u(1)=\int_1^r\frac{\varDelta_u^{\text{\rm \tiny rad}}(t)}{t}\,{\rm d}t, 
\end{equation*}
and by \eqref{df:MCBb} together with 
\begin{lemma}[{\cite{Beardon}, \cite[Theorem 3]{FM}}]\label{CB}
If $u$ be a subharmonic function on $\mathbb C$, then
${\sf B}(z,t)\leq {\sf C}(z,t)\leq {\sf B}(z,\sqrt{e}t)$ for each $z\in \mathbb C$ and for each  $t\in \mathbb R^+$.
\end{lemma}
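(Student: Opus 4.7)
My plan is to prove the two inequalities of Lemma~\ref{CB} separately, relying on the monotonicity and logarithmic convexity of the circle mean $r\mapsto {\sf C}_u(z,r)$; both properties are standard consequences of the Poisson--Jensen identity \eqref{PJ}, which yields $\frac{d}{dr}{\sf C}_u(z,r) = \varDelta_u(z,r)/r$ with a non-decreasing numerator, whence ${\sf C}_u(z,\cdot)$ is non-decreasing on $\mathbb{R}^+$ and $\tau\mapsto {\sf C}_u(z, e^\tau)$ is convex on $\mathbb{R}$.

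The first inequality ${\sf B}_u(z,t)\leq {\sf C}_u(z,t)$ is immediate: since $s\mapsto {\sf C}_u(z,s)$ is non-decreasing, substituting ${\sf C}_u(z,s)\leq {\sf C}_u(z,t)$ for $s\in(0,t)$ into \eqref{df:MCBb} yields ${\sf B}_u(z,t)\leq \frac{2}{t^2}\int_0^t {\sf C}_u(z,t)\, s\, ds = {\sf C}_u(z,t)$.

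The heart of the lemma is the second inequality ${\sf C}_u(z,t)\leq {\sf B}_u(z,\sqrt{e}\,t)$. Using the identity $\int_0^{\sqrt{e}\,t} 2s\, ds = e t^2$ together with \eqref{df:MCBb}, I would reduce it to
\begin{equation*}
\int_0^{\sqrt{e}\,t} \bigl({\sf C}_u(z,s) - {\sf C}_u(z,t)\bigr)\, s\, ds \;\geq\; 0.
\end{equation*}
The integrand is $\leq 0$ on $(0,t)$ and $\geq 0$ on $(t,\sqrt{e}\,t)$, while the weight $s$ favors the positive part; the constant $\sqrt{e}$ should be exactly what is needed for the positive contribution to dominate. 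Changing variables $s=e^\tau$, setting $\sigma:=\log t$ and $\phi(\tau):={\sf C}_u(z,e^\tau)$, the assertion rewrites as $\int_{-\infty}^{\sigma+1/2}\bigl(\phi(\tau)-\phi(\sigma)\bigr) e^{2\tau}\,d\tau \geq 0$.

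At this point I would invoke convexity of $\phi$: $\phi(\tau)-\phi(\sigma) \geq \phi'(\sigma^+)(\tau-\sigma)$ for all $\tau$, with $\phi'(\sigma^+)\geq 0$ since $\phi$ is non-decreasing. Substituting this linear minorant and factoring out $\phi'(\sigma^+) e^{2\sigma}$, the claim reduces to the elementary first-moment computation
\begin{equation*}
\int_{-\infty}^{1/2} y\, e^{2y}\,dy \;=\; \Bigl[\tfrac{y\, e^{2y}}{2}\Bigr]_{-\infty}^{1/2} - \int_{-\infty}^{1/2}\tfrac{e^{2y}}{2}\,dy \;=\; \tfrac{e}{4}-\tfrac{e}{4} \;=\; 0,
\end{equation*}
which closes the argument and simultaneously identifies $\sqrt{e}$ as sharp: it is the unique upper limit $a=1/2$ at which $\int_{-\infty}^{a} y e^{2y}\,dy$ vanishes, with equality attained when $\phi$ is affine, e.g.\ for $u(w)=\log|w-z|$. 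The only care required is that ${\sf C}_u(z,\cdot)$ is finite on $(0,+\infty)$ whenever $u\not\equiv-\infty$, which guarantees the integrals converge and the subgradient exists; no serious obstacle is foreseen.
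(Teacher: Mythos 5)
Your argument is correct. Note, however, that the paper does not prove Lemma~\ref{CB} at all: it is quoted as a known result of Beardon and of Freitas--Matos, so there is no internal proof to compare against. Your write-up is in fact a valid self-contained proof along the classical lines of those references: monotonicity of $r\mapsto{\sf C}_u(z,r)$ gives ${\sf B}_u(z,t)\leq{\sf C}_u(z,t)$; the reduction of ${\sf C}_u(z,t)\leq{\sf B}_u(z,\sqrt{e}\,t)$ to $\int_0^{\sqrt{e}t}\bigl({\sf C}_u(z,s)-{\sf C}_u(z,t)\bigr)s\,{\rm d}s\geq 0$ is exact because $\int_0^{\sqrt{e}t}s\,{\rm d}s=et^2/2$; and after the substitution $s=e^{\tau}$ the supporting-line bound from convexity of $\phi(\tau)={\sf C}_u(z,e^{\tau})$ reduces everything to $\int_{-\infty}^{1/2}y\,e^{2y}\,{\rm d}y=0$, which you compute correctly (and since that moment vanishes, the sign of $\phi'(\sigma^+)$ is actually immaterial). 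Your check that $u(w)=\log|w-z|$ gives equality confirms sharpness of $\sqrt{e}$. The only cosmetic point: when $u(z)=-\infty$ the Poisson--Jensen identity \eqref{PJ} should be used in its two-radius form ${\sf C}_u(z,r_2)-{\sf C}_u(z,r_1)=\int_{r_1}^{r_2}\varDelta_u(z,t)\,t^{-1}{\rm d}t$ to justify monotonicity and log-convexity of the circle mean, and one should note that ${\sf C}_u(z,\cdot)$ is finite on $(0,+\infty)$ because $u\not\equiv-\infty$ is locally integrable; you flag exactly this, so there is no gap.
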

we can easily obtain 
\begin{lemma}\label{w}
Let $u\not\equiv -\infty$ be a subharmonic function  on $\mathbb C$ with  Riesz measure \eqref{Rm}. Then, for each $r\geq 1$,  
\begin{equation}\label{dC}
{\sf B}_u(r)-{\sf C}_u(1)\leq {\sf C}_u(r)-{\sf C}_u(1)\leq 
\int_1^r\frac{\varDelta_u^{\text{\rm \tiny rad}}(t)}{t}\,{\rm d}t\leq 
{\sf C}_u(r)\leq {\sf B}_u(\sqrt{e}r). 
\end{equation}
In particular,  we have the  following equalities\/ 
$${\sf ord}[\varDelta_u]={\sf ord}[{\sf C}_u]={\sf ord}[{\sf B}_u],
$$ and
the following equivalences 
\begin{equation*}
\bigl[{\sf type}_p[\varDelta_u]<+\infty\bigr]
\Longleftrightarrow\bigl[{\sf type}_p[{\sf C}_u]<+\infty\bigr]\Longleftrightarrow\bigl[{\sf type}_p[{\sf B}_u]<+\infty\bigr]
\end{equation*}
 for each strictly positive $p\in \mathbb R^+\!\setminus\!\{0\}$.
\end{lemma}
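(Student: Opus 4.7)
The plan is to derive the chain \eqref{dC} from two ingredients: the Poisson--Jensen identity \eqref{PJ} specialised to $z=0$, and the two-sided comparison of Lemma \ref{CB}. First I would write \eqref{PJ} at the origin; since $\varDelta_u(0,t)=\varDelta_u^{\text{\rm \tiny rad}}(t)$ by \eqref{mzt} and \eqref{mu}, it reads $u(0)={\sf C}_u(r)-\int_0^r \varDelta_u^{\text{\rm \tiny rad}}(t)/t\,{\rm d}t$. Taking the difference of this relation at radii $r\geq 1$ and $1$ yields the equality ${\sf C}_u(r)-{\sf C}_u(1)=\int_1^r \varDelta_u^{\text{\rm \tiny rad}}(t)/t\,{\rm d}t$ displayed just before the lemma, which is the middle link of \eqref{dC}. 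The outermost inequalities ${\sf B}_u(r)\leq {\sf C}_u(r)$ and ${\sf C}_u(r)\leq {\sf B}_u(\sqrt{e}\,r)$ are exactly Lemma \ref{CB} at $z=0$, while the remaining step $\int_1^r \varDelta_u^{\text{\rm \tiny rad}}(t)/t\,{\rm d}t\leq {\sf C}_u(r)$ amounts to ${\sf C}_u(1)\geq 0$, which one may arrange without loss of generality by adding a constant to $u$.

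For the equivalence of ${\sf B}_u$ and ${\sf C}_u$ in order and type, the sandwich ${\sf B}_u^{\text{\rm \tiny rad}}(r)\leq {\sf C}_u^{\text{\rm \tiny rad}}(r)\leq {\sf B}_u^{\text{\rm \tiny rad}}(\sqrt{e}\,r)$ already suffices: the dilation $r\mapsto \sqrt{e}\,r$ alters $r^p$ only by the bounded multiplicative factor $e^{p/2}$, so definitions \eqref{order}--\eqref{mot} immediately produce ${\sf ord}[{\sf C}_u]={\sf ord}[{\sf B}_u]$ as well as the simultaneous finiteness of ${\sf type}_p[{\sf C}_u]$ and ${\sf type}_p[{\sf B}_u]$ for every $p>0$.

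To bring $\varDelta_u$ into the picture, I would use the middle equality in both directions. Monotonicity of $\varDelta_u^{\text{\rm \tiny rad}}$ (as the $\varDelta_u$-mass of a nested family of closed disks) yields
\begin{equation*}
\varDelta_u^{\text{\rm \tiny rad}}(r)=\int_r^{er}\frac{\varDelta_u^{\text{\rm \tiny rad}}(r)}{t}\,{\rm d}t\leq \int_1^{er}\frac{\varDelta_u^{\text{\rm \tiny rad}}(t)}{t}\,{\rm d}t={\sf C}_u(er)-{\sf C}_u(1)
\end{equation*}
for $r\geq 1$, so the growth of ${\sf C}_u$ controls the growth of $\varDelta_u^{\text{\rm \tiny rad}}$. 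Conversely, if $\varDelta_u^{\text{\rm \tiny rad}}(t)\leq Kt^p$ for large $t$, direct integration of the same identity gives ${\sf C}_u(r)={\sf C}_u(1)+O(r^p)=O(r^p)$. Combined, these two estimates deliver both ${\sf ord}[\varDelta_u]={\sf ord}[{\sf C}_u]$ and the equivalence ${\sf type}_p[\varDelta_u]<+\infty\Leftrightarrow{\sf type}_p[{\sf C}_u]<+\infty$ for each strictly positive $p$.

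There is no substantial obstacle; the only point requiring care is the third inequality in \eqref{dC}, which is not intrinsic to Poisson--Jensen and forces the harmless normalization ${\sf C}_u(1)\geq 0$. Since every subsequent use of Lemma \ref{w} in the proof of Theorem \ref{thm} concerns only asymptotic order and type, this normalization is legitimate and costs nothing.
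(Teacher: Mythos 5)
Your proof is correct and follows the same route the paper only sketches (the paper merely asserts that the chain \eqref{dC} follows from the displayed Poisson--Jensen difference formula together with Lemma~\ref{CB}). You have also correctly identified a small imprecision the paper glosses over: the third link $\int_1^r\varDelta_u^{\text{\rm \tiny rad}}(t)t^{-1}\,{\rm d}t\leq {\sf C}_u(r)$ is not a consequence of Poisson--Jensen alone but is equivalent to ${\sf C}_u(1)\geq 0$, which requires the harmless normalization $u\mapsto u+\mathrm{const}$; since only the order and type conclusions are used downstream (in establishing \eqref{au}), and these are invariant under adding constants, the normalization indeed costs nothing. The derivation of the order equalities and the type-finiteness equivalences from the sandwich, via the monotonicity bound $\varDelta_u^{\text{\rm \tiny rad}}(r)\leq{\sf C}_u(er)-{\sf C}_u(1)$ and the converse integration when $\varDelta_u^{\text{\rm \tiny rad}}(t)=O(t^p)$ with $p>0$, is exactly what the cited passage of \cite{BaiKhaKha17} has in mind.
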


\section{The proof of Theorem \ref{thm}}\label{pr:Th}

\subsection{From Theorem \ref{pr:f} to \eqref{E}}
Let $q'\in \mathbb R^+$.  
By Lemma  \ref{w}, we have 
\begin{equation}\label{au}
a_{u}:={\sf ord}[\varDelta_u]\overset{\eqref{dC}}{=}{\sf ord}[{\sf C}_u]<+\infty. 
\end{equation}
 We choose  
\begin{equation}\label{pn}
q:=a_u+q'+3\geq 3.
\end{equation} 
and an entire function $f_q$ from Theorem \ref{pr:f} with properties \eqref{p}--\eqref{fBC}.  Then, for entire function $e^{-1}f_q\not\equiv 0$, we obtain
\begin{multline}\label{efp}
\log \bigl|e^{-1}f_q(z)\bigr|\leq {\sf C}_{u}\bigl(z,Q(z)\bigr)-1\\
\overset{\eqref{PJ}}{=}u(z)+\int_0^{Q(z)}\frac{\varDelta_u(z,t)}{t}\,{\rm d} t-1\quad\text{for each $z\in \mathbb C\!\setminus\! (-\infty)_u$}, 
\end{multline}
where $(-\infty)_u:=\bigl\{z\in \mathbb C\colon u(z)=-\infty\bigr\}$ is a {\it minus-infinity\/} $G_{\delta}$ {\it polar set\/} \cite[3.5]{Rans}, and
$1$-dimensional Huasdorff measure   of $(-\infty)_u$ is zero \cite[5.4]{HK}. Therefore, 
this set $(-\infty)_u$ can be covered by a system of disks  as in \eqref{EE}
with $q'$ instead of $q$. By Lemma \ref{pr:r} with  
\begin{equation}\label{Ep}
r\overset{\eqref{p}}{:=}Q, \quad d\overset{\eqref{r0}}{\leq}2,\quad  
 \mu\overset{\eqref{Rm}}{:=}\varDelta_u,  \quad E_{q}
\overset{\eqref{DE}}{:=}\bigcup_k D(z_k,t_k)
 \overset{\eqref{Er},\eqref{EE}}{\supset}E_{\mu,r}, 
\end{equation}
 we have, in view of  \eqref{efp}, 
\begin{equation}\label{efpE}
\log \bigl|e^{-1}f_q(z)\bigr|\overset{\eqref{efp},\eqref{Er}}{\leq}u(z), \quad\text{for each $z\in \mathbb C\!\setminus\! \bigl(E_q\cup (-\infty)_u\bigr)$}.
\end{equation}
If $S:=E_q\!\setminus\! D(R)$  and $R\geq 4$, then, by \eqref{iSD},   
\begin{multline*}\label{SD}
\frac1{2020}\sum_{|z_k|\geq R} t_k\overset{\eqref{iSD}}{\leq} 
\int_{S^{\cup d}} r^{\vee d} \,{\rm d} \varDelta_u
\overset{\eqref{Ep}}{\leq} \int_{|z|\geq R-2}
\frac{1}{\bigl(1+(|z|-2)\bigr)^q}\,{\rm d}\varDelta_u(z)
\\
=\int_{R-2}^{+\infty} \frac{1}{(t-1)^q}\,{\rm d}\varDelta^{\text{\rm \tiny rad}}_u(t)
\overset{\eqref{pn}}{\leq}
\int_{R-2}^{+\infty} \frac{\varDelta^{\text{\rm \tiny rad}}_u(t)}{(t-1)^{q-1}}\,{\rm d}t\\
\overset{\eqref{mot},\eqref{au}}{\leq} {\rm const}\int_{R-2}^{+\infty} \frac{t^{a_u+1}}{(t-1)^{q-1}}\,{\rm d}t
\overset{\eqref{pn}}{=}O(R^{a_u+3-q})\quad\text{as $R\to +\infty$},  
\end{multline*}
where ${\rm const}\in \mathbb R^+$ is independent of $R$, and
 $R^{a_u+3-q}\overset{\eqref{pn}}{=}R^{-q'}$. 
The latter together with \eqref{efpE} gives the statements \eqref{E} of Theorem \ref{thm}. 

\subsection{From \eqref{E}  to Theorem \ref{pr:f}}
Let $q^*\in \mathbb R^+$. Suppose that statements \eqref{E} of Theorem \ref{thm} are fulfilled with $q>q'>q^*\geq 0$. By Lemma \ref{lemR} there exists a number $R_q\in \mathbb R^+$ such that for any $z\in \mathbb C$ with $|z|>R_q$ there is a positive number $r_z\leq \bigl(1+|z|\bigr)^{-q'}$ such that 
$E_q\cap \partial \overline D(z,r_z)=\varnothing$.  
Hence, by \eqref{EI}, we obtain
\begin{equation}\label{q}
\log \bigl|f_q(z+r_ze^{is})\bigr|\leq u(z+r_ze^{is})\quad\text{for each $s\in  \mathbb R$}
\end{equation} 
and for any $z\in \mathbb C$ with $|z|\geq R_q$. Therefore,
\begin{equation*}
\log \bigl|f_q(z) \bigr|\leq {\sf C}_{\log|f_q|}(r_z)\overset{\eqref{q}}{\leq} {\sf C}_{u}(r_z)
\leq {\sf C}\Bigl(z,\frac{1}{(1+|z|)^{q'}}\Bigr)
\quad\text{if $|z|\geq R_q$}. 
\end{equation*}
Hence there exist a sufficiently small number $a>0$ and a sufficiently large number 
$R_{q^*}\geq R_q $ such that 
\begin{equation*}
\log \bigl|af_q(z) \bigr|
\leq 
{\sf C}\Bigl(z,\frac{1}{\sqrt{e}(1+|z|)^{q^*}}\Bigr)
\quad\text{if   $|z|\geq R_{q^*}$}.
\end{equation*}
The function $\log|af_q|$ is bounded from above on $D(R_{q^*})$, and the function
\begin{equation*}
{\sf C}\Bigl(\cdot , \frac{1}{\sqrt{e}(1+R_{q^*})^{q^*}}\Bigr) \colon z
\underset{\text{\tiny $z\in \mathbb C$}}{\longmapsto} 
{\sf C}\Bigl(z, \frac{1}{\sqrt{e}(1+R_{q^*})^{q^*}}\Bigr) 
\end{equation*}
is continuous \cite[Theorem 1.14]{Helms}. Therefore, there exists a sufficiently small number $b>0$ such that  
\begin{equation*}
\log \bigl|abf_q(z) \bigr|
\leq 
{\sf C}\Bigl(z,\frac{1}{\sqrt{e}(1+|z|)^{q^*}}\Bigr)
\quad\text{for all   $z\in \mathbb C$}.
\end{equation*}
Hence, for $f_{q^*}:=abf_q\neq 0$, by Lemma \ref{CB}, we obtain  \eqref{fBC} with $q^*\in \mathbb R^+$ instead of $q$ in \eqref{p}. 
Further, equalities \eqref{oto} and \eqref{ott} for orders and types are obvious consequences of  \eqref{fBC} even for $q = 0$. Similarly, we obtain equality \eqref{oti}, since indicators \eqref{ind} of the growth of $\log|f_q|$ and $u$ are continuous.
Relations \eqref{R}--\eqref{C} are obvious particular cases of \eqref{EE}.

\begin{footnotesize}
\begin{flushleft}
Bulat N. Khabibullin\\
\textit{Bashkir State University, Ufa, Russian Federation}\\
\textit{E-mail:} khabib-bulat@mail.ru\\
\end{flushleft}
\end{footnotesize}

\end{document}